\documentclass[11pt]{article}

\usepackage[letterpaper,left=1.25in,right=1.25in,top=1in,bottom=1in]{geometry}
\usepackage{amsmath,amssymb,amsthm}
\usepackage{setspace}
\usepackage{microtype}
\usepackage{needspace}
\usepackage{hyperref}

\allowdisplaybreaks
\newtheorem{theorem}{Theorem}[section]
\newtheorem{conjecture}[theorem]{Conjecture}
\newtheorem{lemma}[theorem]{Lemma}
\newtheorem{claim}{Claim}[section]

\hypersetup{
  hidelinks,
  pdfauthor={Xinheng Lin},
  pdftitle={An Asymptotically Tight t log t Bound for k-Connected Subgraphs in Dense Kt-Minor-Free Graphs},
  pdfsubject={Graph minors and vertex connectivity},
  pdfkeywords={Hadwiger's conjecture, graph minors, vertex connectivity, dense subgraphs, random graphs}
}

\title{An Asymptotically Tight \texorpdfstring{$t\log t$}{t log t} Bound for
  \texorpdfstring{$k$}{k}-Connected Subgraphs in Dense
  \texorpdfstring{$K_t$}{Kt}-Minor-Free Graphs}
\author{Xinheng Lin\\[1.4ex]
  \normalsize Center for Discrete Mathematics, Fuzhou University\\[-0.1ex]
  \normalsize Fuzhou, P.\ R.\ China}
\date{August 6, 2026}

\begin{document}

\setlength{\abovedisplayskip}{6pt plus 2pt minus 2pt}
\setlength{\belowdisplayskip}{6pt plus 2pt minus 2pt}
\setlength{\abovedisplayshortskip}{0pt plus 2pt}
\setlength{\belowdisplayshortskip}{3pt plus 2pt minus 1pt}

\maketitle

\begin{abstract}
Delcourt and Postle reduced the Linear Hadwiger Conjecture to coloring
$K_t$-minor-free graphs on $O(t\log^4 t)$ vertices. An important theorem in
their proof process asserts that every sufficiently dense $K_t$-minor-free
graph contains a small, highly connected subgraph. In this paper, we show that
such a subgraph can be chosen to be smaller. More precisely, there
exists an integer constant $C\geq 1$ such that, for all integers $t\geq 3$ and
$k\geq t$, every $K_t$-minor-free graph $G$ with $d(G)\geq Ck$ contains a
nonempty $k$-connected subgraph $H$ satisfying
\[
  v(H)\leq C^2t\log t.
\]
Thus the structural bound improves from $O(t\log^3 t)$ to $O(t\log t)$. We
also give a probabilistic construction showing that the $t\log t$ bound on
$v(H)$ is best possible up to a constant factor. Consequently, the graphs
occurring in the reduction have order $O(t\log^2 t)$ rather than
$O(t\log^4 t)$.

\noindent\textbf{Key Words:} Hadwiger's conjecture, graph minors, vertex connectivity, random graphs
\end{abstract}

\footnotetext[1]{2020 Mathematics Subject Classification: 05C15, 05C40, 05C83.\\
Email: \href{mailto:2500310007@fzu.edu.cn}{2500310007@fzu.edu.cn}}

\section{Introduction}

All graphs in this paper are finite and simple. For a graph $G$, we denote its
chromatic number by $\chi(G)$. For graphs $H$ and $G$, we write
$H\preccurlyeq_{\mathrm m}G$ if $H$ is a minor of $G$. We denote by $K_t$ the
complete graph on $t$ vertices, and say that $G$ is $K_t$-minor-free if
$K_t\not\preccurlyeq_{\mathrm m}G$.

Reed and Seymour~\cite{reed-seymour} raised the following linear weakening of
Hadwiger's conjecture.

\begin{conjecture}[Linear Hadwiger Conjecture~\cite{reed-seymour}]\label{conj:linear-hadwiger}
There exists an integer constant $C>0$ such that, for every integer $t\geq1$,
every $K_t$-minor-free graph $G$ satisfies $\chi(G)\leq Ct$.
\end{conjecture}

In 2025, Delcourt and Postle~\cite{delcourt-postle} reduced
Conjecture~\ref{conj:linear-hadwiger} to coloring small $K_t$-minor-free
graphs.

\begin{theorem}[Delcourt--Postle~\cite{delcourt-postle}]\label{thm:reduction}
There exists an integer constant $C_{1.2}\geq 1$ such that the following
holds. If, for every integer $t\geq 3$, every $K_t$-minor-free graph $H$ with
\[
  v(H)\leq C_{1.2}t\log^4 t
\]
satisfies $\chi(H)\leq C_{1.2}t$, then, for every integer $t\geq 3$, every
$K_t$-minor-free graph $G$ satisfies $\chi(G)\leq C_{1.2}^2t$.
\end{theorem}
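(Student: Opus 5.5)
The plan is a minimal-counterexample argument, using the structural theorem on small highly connected subgraphs of dense $K_t$-minor-free graphs (the $O(t\log^3 t)$ result of Delcourt and Postle). Assume the hypothesis holds for all $t\ge 3$, and suppose $G$ is a $K_t$-minor-free graph with $\chi(G)>C_{1.2}^2t$. Choose $G$ with $v(G)$ minimal, so that $G$ is vertex-critical and hence $\delta(G)\ge C_{1.2}^2t$. Small values of $t$ are absorbed into the constant via the known $O(t\log\log t)$ bound, so I take $t$ large throughout.

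\textbf{Step 1: a small, very connected, colourable piece.} Since $d(G)\ge \delta(G)\ge C_{1.2}^2 t$, I apply the structural theorem with $k=\Theta(C_{1.2}t)$. This gives a nonempty $k$-connected subgraph $H\subseteq G$ with $v(H)=O(t\log^3t)$, and after shrinking constants $v(H)\le C_{1.2}t\log^4 t$. Because $H$ is $K_t$-minor-free, the hypothesis gives $\chi(H)\le C_{1.2}t$.

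\textbf{Step 2: colour the rest.} Then $G-V(H)$ is a proper subgraph, so by minimality it has a colouring with $C_{1.2}^2 t$ colours. The aim is to extend or recombine these colourings. Simply using disjoint palettes costs $C_{1.2}^2t+C_{1.2}t$ colours, which is too many. So I will instead iterate, peeling off a maximal family $H_1,\dots,H_m$ of vertex-disjoint such small $k$-connected pieces. The leftover graph then has no dense part, so it has degeneracy $O(C_{1.2}t)$ and is colourable greedily with a separate palette. The pieces themselves are coloured from a common palette of size $C_{1.2}t$. This works provided no edges join distinct pieces in conflicting ways, which is not true in general.

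\textbf{Step 3: handle edges between pieces.} To control the edges between pieces, I use the high connectivity of each $H_i$ as a routing device. Contract each $H_i$ to a vertex and consider the auxiliary graph of pieces. It is a minor of $G$, hence $K_t$-minor-free, and so it is $O(t\sqrt{\log t})$-degenerate. Colouring this auxiliary graph with a bounded number of palette blocks, each of size $C_{1.2}t$, gives a total count I expect to fit inside $C_{1.2}^2t$ once constants are tuned.

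\textbf{Main obstacle.} The hard step is Step 3. A naive degeneracy bound on the contracted graph is too large by a $\sqrt{\log t}$ factor. To overcome this, I would use $k$-connectivity, through rooted-minor and linkage arguments, to show the following: if some piece had many neighbouring pieces, the linkages inside these pieces would assemble a $K_t$ minor. This would force every piece to have only $O(C_{1.2})$ neighbouring pieces, which keeps the palette count linear in $t$.
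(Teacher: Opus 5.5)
Your proposal has a genuine gap at Step~3. Steps~1--2 are fine as far as they go: maximality of the family forces every subgraph of the leftover to have density below $C_{1.3}k$, so the leftover is $O(C_{1.3}k)$-degenerate. But everything then rests on colouring $G[\bigcup_i V(H_i)]$ with $O(C_{1.2})$ palette blocks of size $C_{1.2}t$, and the lemma you propose for this is false. A piece can have arbitrarily many neighbouring pieces in a $K_t$-minor-free graph. Join one $k$-connected $K_t$-minor-free piece $P_0$ by single edges to further pieces $P_1,\ldots,P_N$. Every block is then a piece or a bridge, and since $K_t$ is $2$-connected for $t\geq3$, any $K_t$ minor would lie inside one block. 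So no linkage argument can bound the number of neighbouring pieces.

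Bounded degree is also not the real requirement. The palette scheme needs the contracted piece graph to have chromatic number bounded independently of $t$. All you know is that this graph is $K_t$-minor-free, which gives up to $O(t\sqrt{\log t})$ blocks, i.e.\ $O(C_{1.2}t^2\sqrt{\log t})$ colours. This is not merely a weak estimate. Join $s=\lfloor\sqrt t\rfloor$ pieces that are $K_{\lfloor t/2\rfloor}$-minor-free pairwise by single edges. The result is $K_t$-minor-free:
\begin{itemize}
\item a branch set meeting two pieces uses one of the $\binom{s}{2}<t/2$ connecting edges;
\item branch sets lying inside two different pieces can be adjacent only through the unique edge between those pieces, so either one piece contains all such branch sets (fewer than $t/2$) or each piece contains at most one;
\item hence there are fewer than $t/2+t/2=t$ branch sets.
\end{itemize}
Yet the piece graph is $K_s$, and your scheme spends about $\sqrt t\cdot C_{1.2}t$ colours.

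The missing idea is how the small-graph hypothesis interacts with the minimal counterexample. In your plan, minimality is used only to get $\delta(G)\geq C_{1.2}^2t$. You are therefore trying to colour an arbitrary $K_t$-minor-free graph by gluing colourings of small pieces, and nothing controls how the pieces interact.

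The paper does not reprove Theorem~\ref{thm:reduction}; it cites Delcourt and Postle. It notes only that their argument (Corollary~6.1, Lemma~6.4 and Theorem~7.1 there) uses $O((\sqrt{\log t})^2)=O(\log t)$ of the subgraphs from Theorem~\ref{thm:dp-structural} together. The hypothesis is therefore applied to graphs of total order $O(\log t)\cdot O(t\log^3 t)=O(t\log^4 t)$, which is exactly where the exponent $4$ comes from. Your Step~1 applies the hypothesis to a single piece of order $O(t\log^3 t)$ and leaves a whole factor $\log t$ unused, a sign that this mechanism is absent. Roughly speaking, in that argument the pieces are not coloured and glued. They serve as building blocks that, via connectivity arguments, are assembled into a $K_t$ minor, so the contradiction is with $K_t$-minor-freeness rather than an explicit colouring.

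A minor point: with $d(G)=e(G)/v(G)$ you only have $d(G)\geq\delta(G)/2$, not $d(G)\geq\delta(G)$.
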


One of the key steps in establishing Theorem~\ref{thm:reduction} is to prove
the structural result stated in \cite[Theorem~2.3]{delcourt-postle}.

\begin{theorem}[Delcourt--Postle~\cite{delcourt-postle}]\label{thm:dp-structural}
There exists an integer constant $C_{1.3}\geq 1$ such that the following
holds. Let $t\geq 1$ and $k\geq t$ be integers. If $G$ is a
$K_t$-minor-free graph with $d(G)\geq C_{1.3}k$, then $G$ contains a
nonempty $k$-connected subgraph $H$ such that
\[
  v(H)\leq C_{1.3}^2t\log^3 t.
\]
\end{theorem}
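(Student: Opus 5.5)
The plan is to run the standard ``dense minimal minor plus connectivity extraction'' scheme, with the $t\log^3 t$ loss coming from three logarithmic costs: the small dense subgraph, the connectivity extraction, and the final density bound.

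\textbf{Step 1: pass to a small dense minor.} Take a minimal minor $G'$ of $G$ with $d(G')\geq C_{1.3}k/2$. Contracting any edge must drop the density below this threshold. The edge lies in fewer than $C_{1.3}k/2$ triangles, so every vertex of $G'$ has degree below $C_{1.3}k$ and the minimum degree is at least about $C_{1.3}k/4$. Since $G'$ is still $K_t$-minor-free, the Kostochka--Thomason bound gives $d(G')\le O(t\sqrt{\log t})$.

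\textbf{Step 2: bound the order of the piece we work in.} The local sparsity (few common neighbours per edge) does not bound $v(G')$ directly. I would first apply a density-increment argument in the style of the Kostochka--Thomason proof: find a subgraph $G''$ on $O(t\log t)$ vertices whose density is at least $ck$. Concretely, take a random subset of size about $t\log t$, or use a neighbourhood-based search, and argue that if no such small dense subgraph exists then one can build a $K_t$ minor from small connected pieces. This costs a $\log$ factor.

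\textbf{Step 3: obtain high connectivity.} Apply Mader's theorem inside $G''$. Every graph of average degree at least $4k$ contains a $k$-connected subgraph, and that subgraph lives on at most $v(G'')$ vertices. To make Step 2 compatible with this, I would lift the minor back to a subgraph. Each vertex of $G''$ is a branch set, and the branch sets can be taken of size $O(\log t)$ via a shortest-path / BFS-radius argument on the minimal minor. Alternatively, I would perform the entire argument on subgraphs rather than minors, using the Mader-style minimal counterexample: a subgraph minimal with respect to $e(H)\ge c k\, v(H)$.

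\textbf{Main obstacle.} Bounding the total order $v(H)$ by $\log^3 t$ factors times $t$ is the hard part. The three losses are the size of the dense piece ($t\log t$), the branch-set sizes ($\log t$), and the separator/connectivity extraction ($\log t$). Producing $K_t$ from any sufficiently large connected sparse-expanding piece, via a robust expansion lemma, is where I would concentrate the effort.
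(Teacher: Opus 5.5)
There is a genuine gap. Step~2 carries the entire content of the theorem and is given no mechanism, and the steps around it would not work as written.

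\textbf{Step 1.} The reasoning is inverted. If $G'$ is a minimal minor with $e(G')\geq D\,v(G')$, then contracting an edge $uv$ shows that $u,v$ have \emph{more} than $D-1$ common neighbours. Deleting a vertex shows $\delta(G')>D$. Minimality gives lower bounds, not an upper bound on degrees.

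\textbf{The minor route.} More seriously, a small dense minor is the wrong object, since the theorem asks for a subgraph of $G$.
\begin{itemize}
\item Minimality of $G'$ says nothing about the branch sets realising it in $G$, so your $O(\log t)$ bound on them has no basis.
\item Even granting branch sets of size $b$, the lifted graph is only guaranteed density $d(G'')/b$. Every minor of $G$ has density below $30t\sqrt{\log t}$ by Theorem~\ref{thm:kt-density}, while invoking Mader needs density at least $2k\geq 2t$. So $b=\Theta(\log t)$ is unaffordable.
\item Lifting after applying Mader is no better. A branch set can be a long path whose interior vertices have degree two in the lifted graph, so $k$-connectivity does not lift.
\item Your subgraph-minimal alternative is just Mader's own argument and gives no bound on $v(H)$ at all.
\end{itemize}

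\textbf{Step 2.} A random $s$-subset of an $n$-vertex graph of density $d$ has expected density about $ds/n$, which is negligible when $n\gg t\log t$. ``Otherwise build a $K_t$ minor from small pieces'' simply restates the theorem.

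\textbf{Accounting.} Mader's theorem costs nothing in order, so your three-logarithm accounting is reverse-engineered rather than derived.

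\textbf{The missing idea} is a way to produce a dense set of \emph{uncontracted} vertices. Delcourt and Postle, and the refined proof of Theorem~\ref{thm:main}, proceed as follows.
\begin{itemize}
\item Contract a maximal family of disjoint connected subgraphs $H_1,\dots,H_m$, each of large total degree and cheap to contract. Then the set $X$ of contracted vertices is small.
\item Apply the Norin--Postle asymmetric bipartite bound (Theorem~\ref{thm:bipartite}), the key ingredient absent from your plan. It shows that few edges meet $X$ or the set $Y$ of vertices sending a constant fraction of their edges into $X$.
\item Apply Lemma~\ref{lem:dense-core} to the remaining vertices to get $S'$. Inside $S'$, take a maximal connected $U$ of small total degree whose contraction is cheap.
\item Maximality (of $U$ and of the family) forces every neighbour $z$ of $U$ in $S'$ to have $\Omega(d)$ common neighbours, where $d=Ck$, with the contracted vertex outside $X$. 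A constant fraction of $R:=N(U)\setminus X$ consists of such $z$.
\item Hence $G[R]$ has density $\Omega(d)$ and $|R|\leq\sum_{u\in U}\deg(u)$. It is an honest subgraph of $G$ because $R$ avoids all contracted vertices. Mader's theorem applied to $G[R]$ then loses nothing.
\end{itemize}
The $\log^3 t$ of Delcourt--Postle comes from $|U|=O(\log^2 t\,(t/k)^2)$ times a degree cap of $O(k\log t)$. Bounding $\sum_{u\in U}\deg(u)$ directly by $O(d\log t\,(t/k)^2)$ gives Theorem~\ref{thm:main}. That implies the statement for $t\geq 3$, and the statement is vacuous for $t\leq 2$.
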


In~\cite{delcourt-postle}, Delcourt and Postle first choose a collection
$H_1,\ldots,H_m$ of pairwise vertex-disjoint connected subgraphs, with $m$
as large as possible, and contract each $H_i$ to a single vertex. They then
discard vertices of degree larger than a constant multiple of $k\log t$ and
obtain a nonempty connected subgraph $U$ of order
\[
  O\!\left(\log^2 t\left(\frac{t}{k}\right)^2\right).
\]
Using this subgraph, they choose a set $R$ in its neighborhood such that
$G[R]$ is dense and has order
\[
  O\!\left(k\log t\cdot\log^2 t\left(\frac{t}{k}\right)^2\right)
  =O\!\left(\frac{t^2}{k}(\log t)^3\right).
\]
Our strategy follows the same neighborhood construction, but controls the
size of the resulting neighborhood directly through the sum
$\sum_{u\in U}\deg_G(u)$, rather than controlling both $|U|$
and this sum as in~\cite{delcourt-postle}. Thus no truncation of high-degree
vertices is needed,
saving one factor of $\log t$. Moreover, by refining the set $Y$ of vertices
with many neighbors among the contracted blocks, we can choose $U$ so that
this sum is smaller, saving a second factor of $\log t$.

The resulting estimate is the following.

\begin{theorem}\label{thm:main}
There exists an integer constant $C_{1.4}\geq 1$ such that the following
holds. Let $t\geq 3$ and $k\geq t$ be integers. If $G$ is a
$K_t$-minor-free graph with $d(G)\geq C_{1.4}k$, then $G$ contains a
nonempty $k$-connected subgraph $H$ such that
\[
  v(H)\leq C_{1.4}^2t\log t.
\]
\end{theorem}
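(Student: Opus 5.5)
We follow the Delcourt--Postle neighbourhood construction, but measure the final set by the total degree $\sum_{u\in U}\deg_G(u)$ of a small connected set $U$, not by $|U|$ and the maximum degree separately. First we reduce to a minor-minimal situation: pass to a minor $G'$ of $G$ that is minimal subject to $d(G')\geq C k$ for a suitable intermediate constant. Then every edge of $G'$ lies in at least $Ck$ triangles, since otherwise contracting it keeps the density. Also every vertex has degree at most about $2Ck$ after deleting a low-degree vertex, and $v(G')$ can be assumed large compared with $t$. Here we use the known bound that $K_t$-minor-free graphs have average degree $O(t\sqrt{\log t})$; it gives $d(G)\le O(t\sqrt{\log t})$, so $k=O(t\sqrt{\log t})$. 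Since a $k$-connected subgraph of a minor lifts to one in $G$ only via the Mader-type argument (a dense minor with bounded order yields a dense subgraph $G[R]$ of $G$), we will rather produce a set $R\subseteq V(G)$ with $d(G[R])\ge C'k$ and $|R|=O(t\log t)$. Mader's theorem, that every graph of average degree at least $4k$ contains a $k$-connected subgraph, then finishes.

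Next comes the contraction step. Choose vertex-disjoint connected subgraphs $H_1,\dots,H_m$ with $m$ maximal, each of small order (say at most $s=\Theta(\log t\cdot t/k)$ vertices) and each dominating many vertices. Contracting them gives a minor. Let $Y$ be the vertices adjacent to at least $\Theta(t/k\cdot\log t)$ blocks, refined to those with many neighbours among blocks that are themselves low total-degree. By $K_t$-minor-freeness and a random-sampling argument as in Alon--Krivelevich--Sudakov/Thomason, if $Y$ is large the blocks plus $Y$ would give a $K_t$ minor. Hence most vertices lie outside $Y$, and maximality of $m$ forces a short connected set $U$, obtained as a union of a block and a few connecting vertices, with
\[
\sum_{u\in U}\deg_G(u)=O\!\left(t\log t\right)\cdot O(1).
\]
The point is that we never truncate high-degree vertices. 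Instead we weight by degree when choosing $U$: among candidate blocks pick the one minimizing total degree. Averaging over the $m$ disjoint blocks then gives total degree at most $2e(G)/m$, and $m$ is shown to be $\Omega(v(G)k/(t\log t))$.

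Finally set $R=N[U]$, so $|R|\le \sum_{u\in U}\deg_G(u)+|U|=O(t\log t)$. To show $G[R]$ is dense, use the triangle property: each edge $uv$ with $u\in U$ lies in $\ge Ck$ triangles whose third vertex is in $N(u)\subseteq R$. Summing gives $e(G[R])\ge \tfrac12 Ck\cdot\sum_{u}\deg(u)/\!\!\sim$, and thus $d(G[R])\ge \Omega(Ck)\ge 4k$ for $C$ large.

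The main obstacle is the middle step: proving that $m$ is large enough, equivalently that $Y$ is small, so that a low-total-degree $U$ of this size exists. This is where the second $\log t$ must be saved. I would first try a weighted version of the Delcourt--Postle counting, charging each vertex of $Y$ to the blocks it sees. The lower bound $t/k\cdot\log t$ neighbours per vertex, together with the Thomason-type random minor bound that $n$ vertices of density $p$ force a $K_{\Omega(n p/\sqrt{\log(1/p)})}$-type minor, should make $Y$ yield a $K_t$ minor once $|Y|\gtrsim t$.
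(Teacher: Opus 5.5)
There is a genuine gap, and it lies in your first and last steps, not only in the middle step you flagged.

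\textbf{The density of $R$ is proved in a minor, not in $G$.} You pass to a minor $G'$ of $G$ that is minor-minimal with $d(G')\geq Ck$. You then use the resulting triangle property (every edge in about $Ck$ triangles) to show that $R=N[U]$ induces a dense graph. But that property holds only in $G'$, whose vertices may be contractions of large branch sets. So $G'[R]$ is a minor of $G$, not a subgraph, whereas the theorem asks for a subgraph $H$ of $G$. Your sentence ``we will rather produce $R\subseteq V(G)$'' silently transfers the triangle property from $G'$ to $G$, where it fails.

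No argument of this shape can work in $G$. The properties you extract from minimality are a vertex $u$ of degree $O(Ck)$ and every edge lying in about $Ck$ triangles. These alone make $G'[N(u)]$ have minimum degree about $Ck$ on $O(Ck)$ vertices, so you would not need $U$ at all. If these properties held in $G$ itself, Theorem~\ref{thm:mader} would give a $k$-connected subgraph of $G$ on $O(k)$ vertices. For $k=t$ this contradicts Theorem~\ref{thm:lower-bound}. Also, minimality gives a lower bound on the minimum degree, not an upper bound of about $2Ck$ on every degree.

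\textbf{The middle step is only asserted.} The claims that $Y$ is small and that $m=\Omega(v(G)k/(t\log t))$ are not proved, and the random-sampling heuristic does not supply them. Moreover, a low-total-degree block found by averaging has no reason to have a dense neighbourhood in $G$.

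\textbf{How the paper does it.} The paper works in the opposite direction.
\begin{itemize}
\item Each block must have large total degree, $w_G(V(H_i))\geq Q=8C_{2.1}^2d\log t\,(t/k)^2$, and contracting it may lose at most a $1/100$ fraction of that weight. This makes $m$, hence $|X|$, small.
\item Because $|X|$ is small, the Norin--Postle asymmetric bound (Theorem~\ref{thm:bipartite}) gives $e_{G'}(X,O)\leq\frac32k\,v(G)$. So the vertices sending at least a $1/200$ fraction of their degree into $X$ carry negligible weight.
\item After a dense-core step (Lemma~\ref{lem:dense-core}), $U$ is taken as large as possible among connected sets with $w_{G'}(U)<Q$ and contraction loss at most $w_{G'}(U)/100$.
\item This maximality, not minor-minimality, forces every neighbour $z$ of $U$ in $S'$ to have more than $\deg_{G'}(z)/100-1$ common neighbours with the contracted vertex $x$. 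Since $z\notin Y$, most of these lie outside $X$, i.e.\ in $R=N_{G''}(x)\setminus X$.
\item $R$ consists of vertices never involved in any contraction, so $G''[R]=G[R]$. This is a genuine dense subgraph of $G$ with $|R|\leq w_{G'}(U)<Q\leq C_{1.4}^2t\log t$.
\end{itemize}
The missing idea is this combination: degree-weighted control of the contraction loss, together with the asymmetric bipartite bound.
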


Replacing Theorem~\ref{thm:dp-structural} with Theorem~\ref{thm:main} in the
reduction of~\cite{delcourt-postle} improves the order bound in
Theorem~\ref{thm:reduction} from $O(t\log^4 t)$ to $O(t\log^2 t)$.
Indeed, a direct check in the proofs of~\cite[Corollary~6.1, Lemma~6.4,
and Theorem~7.1]{delcourt-postle} shows that the construction uses
$O((\sqrt{\log t})^2)=O(\log t)$ such subgraphs, each now of order
$O(t\log t)$; hence their total order is $O(t\log^2 t)$, and the remaining
arguments are unchanged after adjusting the constants.

The order bound in Theorem~\ref{thm:main} is best possible up to its constant
factor. In fact, the following result shows this already in the case $k=t$.

\begin{theorem}\label{thm:lower-bound}
For every fixed integer $A\geq2$, there is an integer $t_A$ such that, for
every integer $t\geq t_A$, there exists a $K_t$-minor-free graph $G$ with
$d(G)\geq At$ such that every $t$-connected
subgraph $H$ of $G$ satisfies
\[
  v(H)>\frac{1}{7200\mathrm e A^2}t\log t.
\]
\end{theorem}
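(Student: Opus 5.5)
We will take $G$ to be a disjoint union of copies of a binomial random graph $\Gamma=G(n,p)$. The parameters are
\[
  n=\lfloor c\,t\sqrt{\log t}\rfloor
  \quad\text{and}\quad
  p=\frac{4At}{n}\asymp \frac{A}{c\sqrt{\log t}},
\]
with a small constant $c=c(A)$ to be chosen. A single copy suffices, so we take $G=\Gamma$. Since $p\to0$, the degree condition $d(G)\ge At$ holds with high probability: the average degree concentrates around $pn=4At$. The proof then has two further parts. The first shows that $\Gamma$ is $K_t$-minor-free with high probability. The second shows that every $t$-connected subgraph of $\Gamma$ has more than $\frac{1}{7200\mathrm e A^2}t\log t$ vertices, which is of order $n\sqrt{\log t}\cdot(\text{const})$.

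This plan fails as stated, since the bound exceeds $n$. It forces the construction to have order $\gg t\log t$, so the random graph must be larger. We therefore instead take
\[
  n=\lfloor c\,t\log t\rfloor
  \quad\text{and}\quad
  p=\frac{2At}{n}\asymp\frac{2A}{c\log t}.
\]
The claim is then that every $t$-connected subgraph $H$ has $v(H)>\varepsilon n$.

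For the minor-freeness, we use the standard first-moment bound on minors in sparse random graphs, in the style of Bollob\'as--Catlin--Erd\H{o}s and Fountoulakis--K\"uhn--Osthus. A $K_t$ minor would require disjoint connected branch sets $B_1,\dots,B_t$ with an edge between every pair. Most branch sets have size at most $2n/t=O(\log t)$, and the pairs among these must all be joined by edges. We union bound over partitions into parts of sizes $s_i$. The probability that two parts of sizes $s_i,s_j$ are joined is at most $\min(1,s_is_jp)$. For the roughly $t/2$ small parts, this gives
\[
  \prod_{i<j}s_is_jp\le\bigl(O(\log^2 t)\,p\bigr)^{\binom{t/2}{2}}
  =\bigl(O(A\log t/c)\bigr)^{t^2/8},
\]
which is not small. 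The naive count is therefore too weak, and we refine it. The key quantity is that the expected number of edges between two random $(\log t)$-sets is $p\log^2 t\approx (2A/c)\log t$, which is large. This shows that $G(n,p)$ at this density actually does contain $K_t$ minors, since its Hadwiger number is about $n/\sqrt{\log_{1/(1-p)}n}$. That quantity is roughly $n\sqrt{p/\log n}\approx c\,t\log t\cdot\sqrt{2A}/(\sqrt c\log t)=\sqrt{2Ac}\,t$. Choosing $c\le 1/(8A)$ makes it less than $t/2$. We would therefore invoke the known sharp estimate $h(G(n,p))=(1+o(1))\,n/\sqrt{\log_{1/(1-p)} n}$ (Bollob\'as--Catlin--Erd\H{o}s, extended to $p=o(1)$ with $pn\to\infty$ by Fountoulakis--K\"uhn--Osthus), or reprove its upper bound by a first-moment count over branch-set sizes.

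For the connectivity part, let $H\subseteq\Gamma$ be $t$-connected with $h=v(H)\le\varepsilon n$. Then $H$ has minimum degree at least $t$, so $e(H)\ge th/2$. We bound the probability that some $h$-set spans at least $th/2$ edges by
\[
  \binom{n}{h}\binom{\binom h2}{th/2}p^{th/2}
  \le\Bigl(\frac{\mathrm e n}{h}\Bigl(\frac{\mathrm e hp}{t}\Bigr)^{t}\Bigr)^{h}.
\]
Here $hp/t\le 2A\varepsilon$, so the inner factor is at most $(\mathrm e n/h)(2\mathrm eA\varepsilon)^t$. We sum over $t\le h\le\varepsilon n$. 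Using $n/h\le n/t=O(\log t)$ and $t$ in the exponent, this is $o(1)$ as soon as $2\mathrm eA\varepsilon<1/2$. This shows that every subgraph with minimum degree $t$ has more than $\varepsilon n=\varepsilon c\,t\log t$ vertices. With $c=1/(8A)$ and $\varepsilon=1/(900\mathrm eA)$, the product $\varepsilon c$ matches the stated constant $1/(7200\mathrm eA^2)$.

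We expect the main obstacle to be the minor-freeness at $p\asymp 1/\log t$, since the naive union bound fails. We plan to handle it via the branch-set-size first-moment argument, splitting branch sets into small ones (size below $\log n/p$-scale) and large ones, the large ones being few since $n=O(t\log t)$.
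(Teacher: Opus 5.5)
Your construction and your first-moment count of sets $S$ with $t+1\le|S|\le\varepsilon n$ spanning at least $t|S|/2$ edges are the paper's. (The per-vertex factor there is $(\mathrm e hp/t)^{t/2}$, not $(\mathrm e hp/t)^{t}$, but this is harmless.) The weak point is minor-freeness.

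The useful elementary estimate is not the bound $s_is_jp$ on the probability that two branch sets are joined. It is a lower bound on the probability that they are not joined: $(1-p)^{s_is_j}=\exp(-(1+O(p))ps_is_j)$. Suppose $p(2n/t)^2\le(1-\delta)\log t$. Then, since $p=O(1/\log t)$, two branch sets of order at most $2n/t$ are non-adjacent with probability at least $t^{-1+\delta/2}$ for large $t$. So, for a fixed family, the $\binom{\lfloor t/2\rfloor}{2}\ge t^2/16$ required adjacencies all occur with probability at most $\exp(-t^{1+\delta/2}/16)$. This beats the at most $(t+1)^n=\exp(O(t\log^2 t))$ families.

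With your $c=1/(8A)$ and $pn=2At$, however, $p(2n/t)^2=8Ac\log t=\log t$ up to lower-order terms. Non-adjacency then has probability only $\Theta(1/t)$, the product is only $\exp(-\Theta(t))$, and the union bound fails. The small/large splitting in your last paragraph does not rescue this short of reproving the sharp Bollob\'as--Catlin--Erd\H{o}s upper bound.

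Your minor-freeness step therefore rests entirely on citing Fountoulakis--K\"uhn--Osthus. Their form is $h=(1\pm\eta)n/\sqrt{\log_b(np)}$ with $b=1/(1-p)$, valid for $C_\eta/n\le p\le 1-\eta$. Here it agrees asymptotically with your formula and gives $h\approx t/2<t$. That citation does close the step, but it is far heavier than needed.

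The paper instead shrinks $c$ and enlarges $\varepsilon$. With $n=\lceil t\log t/(100A)\rceil$ and $p=9At/n$, one has $2p(2n/t)^2\le\frac45\log t$. So non-adjacency has probability at least $t^{-4/5}$, and the bound $\exp(-t^{6/5}/16)$ wins. The small-set count only needs $\mathrm e\varepsilon pn/t<1$, so $\varepsilon=1/(72\mathrm eA)$ is allowed, and $\varepsilon c$ is still $1/(7200\mathrm eA^2)$.

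Two further slips need fixing.
\begin{itemize}
\item \textbf{Density.} With $d(G)=e(G)/v(G)$, as in the paper, your final choice $p=2At/n$ gives $\mathbb E\,e(G)=At(n-1)<Atn$. So $d(G)\ge At$ holds with probability tending to $1/2$, not with high probability. You need $pn\ge(2+\delta)At$; the paper takes $pn=9At$. This is compatible with the FKO route at $c=1/(8A)$ (e.g.\ $pn=3At$ gives $h\approx\sqrt{3/8}\,t$), and with the elementary route after shrinking $c$.
\item \textbf{Rounding.} With $n=\lfloor ct\log t\rfloor$, the conclusion $v(H)>\varepsilon n$ does not quite give $v(H)>\varepsilon c\,t\log t$. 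Use a ceiling, as the paper does, or a slightly larger $\varepsilon$.
\end{itemize}
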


We prove Theorem~\ref{thm:lower-bound} in Section~\ref{sec:lower-bound} by a
probabilistic construction.

\section{Preliminaries}

All logarithms in this paper are natural. We use largely standard
graph-theoretic notation. For a graph $G$, we denote by $V(G)$ and $E(G)$ its
vertex set and edge set, respectively, and write $v(G):=|V(G)|$ and
$e(G):=|E(G)|$. If $G$ is nonempty, its density is $d(G):=e(G)/v(G)$. The
neighborhood and degree of a vertex $v\in V(G)$ are denoted by $N_G(v)$ and
$\deg_G(v)$, respectively. We denote the minimum degree of $G$ by $\delta(G)$
and its vertex connectivity by $\kappa(G)$.

For a set $X\subseteq V(G)$, let $G[X]$ denote the subgraph of $G$ induced by
$X$. If $A,B\subseteq V(G)$ are disjoint, then $G(A,B)$ denotes the bipartite
subgraph of $G$ with vertex set $A\cup B$ and edge set
$\{uv\in E(G):u\in A,\ v\in B\}$, and we write
$e_G(A,B):=|E(G(A,B))|$. For a nonnegative integer $m$, let
$[m]:=\{1,\ldots,m\}$, with $[0]:=\varnothing$.

We use the standard branch-set characterization of a complete minor. Thus
$K_t\preccurlyeq_{\mathrm m}G$ if and only if there are pairwise disjoint
nonempty sets $B_1,\ldots,B_t\subseteq V(G)$ such that every $G[B_i]$ is
connected and $e_G(B_i,B_j)>0$ whenever $i\neq j$.

If $A\subseteq V(G)$ is nonempty and $G[A]$ is connected, then $G/A$ denotes
the simple graph obtained from $G$ by contracting $A$ to a single vertex,
deleting loops, and suppressing parallel edges. More generally, if
$H_1,\ldots,H_m$ are pairwise vertex-disjoint connected subgraphs of $G$, then
$G/(H_1,\ldots,H_m)$ denotes the simple graph obtained by contracting each
$H_i$ to a single vertex. For an edge $uv\in E(G)$, we use $G/uv$ as shorthand
for $G/\{u,v\}$.

For a positive integer $n$ and $p\in[0,1]$, the binomial random graph
$G(n,p)$ has vertex set $[n]$, with every pair of vertices joined
independently with probability $p$. We write $\operatorname{Bin}(M,p)$ for a
binomial random variable with parameters $M$ and $p$.

The proof requires the asymmetric bipartite estimate of Norin and
Postle~\cite{norin-postle}, in the form recorded by Delcourt and
Postle~\cite{delcourt-postle}.

\begin{theorem}[Norin--Postle~\cite{norin-postle}]\label{thm:bipartite}
There exists an integer constant $C_{2.1}\geq 1$ such that, for every integer
$t\geq 3$ and every $K_t$-minor-free bipartite graph $B$ with bipartition
$(P,Q)$,
\[
  e(B)\leq C_{2.1}t\sqrt{\log t}\sqrt{|P||Q|}
  +(t-2)(|P|+|Q|).
\]
\end{theorem}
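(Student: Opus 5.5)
The plan is to derive the estimate from the classical density bound of Kostochka and Thomason: every $K_t$-minor-free graph $F$ satisfies $e(F)\le c_0t\sqrt{\log t}\,v(F)$ for an absolute constant $c_0$. I would apply that bound not to $B$ itself, where it only gives $O(t\sqrt{\log t}(|P|+|Q|))$, but to a minor of $B$ living essentially on the smaller side. The term $(t-2)(|P|+|Q|)$ absorbs all low-degree vertices. So first I delete every vertex of $Q$ with at most $t-2$ neighbours in $P$, and symmetrically every vertex of $P$ with at most $t-2$ neighbours. This loses at most $(t-2)(|P|+|Q|)$ edges, and after it every remaining vertex has degree at least $t-1$ in the remaining bipartite graph $B'$. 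Writing $e:=e(B')$, it remains to show $e\le C\,t\sqrt{\log t}\sqrt{|P||Q|}$. Without loss of generality $|P|\le|Q|$. If $|Q|\le 4|P|$, the Kostochka--Thomason bound applied to $B'$ already suffices, so the interesting case is $|Q|\gg|P|$.

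In that case I would build a minor $F$ on vertex set $P$ by contracting each $q\in Q$ into one neighbour $x_q\in N(q)$ chosen uniformly at random. Then $q$ contributes the pairs $\{x_q,y\}$ for $y\in N(q)\setminus\{x_q\}$. Since $F\preccurlyeq_{\mathrm m}B$, we have $e(F)\le c_0t\sqrt{\log t}\,|P|$. The task is to lower-bound the expected number of \emph{distinct} pairs created. A fixed pair $\{x,y\}$ with codegree $m_{xy}$ in $Q$ is created with probability $1-\prod_{q}(1-\tfrac{2}{\deg q})$, which is at least a constant times $\min\{1,\sum_{q\in N(x)\cap N(y)}1/\deg(q)\}$. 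Before contracting, I would pass to a random subset $P_0\subseteq P$, keeping each vertex with a suitable probability $p$, and retain only the $Q$-vertices with at least two neighbours in $P_0$. The aim is to tune $p$ so that, in the restricted graph, most pairs have small codegree and the truncation at $1$ costs only a constant factor. With this, the number of distinct pairs is of order $\sum_q\deg(q)$ restricted to $P_0$, roughly $p\,e$ (using $\deg q\ge t-1$). Comparing it with $c_0t\sqrt{\log t}\,p|P|$ alone is not enough. The final step balances two estimates. Either many $q$ have large degree, and then the contraction bound forces $e\lesssim t\sqrt{\log t}\,|P|\cdot(\text{average degree of }Q)/t$. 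Or degrees are small, and then $e\le|Q|\cdot(\text{average degree})$. Taking the geometric mean of these two estimates, with the average degree optimized at about $t\sqrt{\log t}\sqrt{|P|/|Q|}$, yields $e=O(t\sqrt{\log t}\sqrt{|P||Q|})$.

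The main obstacle is the multiplicity of pairs. If many vertices of $Q$ share the same neighbourhood, the contraction creates far fewer distinct edges than $\sum_q\deg q$, and the naive argument collapses. The tuning of $p$ is meant to address exactly this. If it does not, I would try a greedy grouping instead: iteratively merge vertices of $Q$ with heavily overlapping neighbourhoods into connected branch sets through a common $P$-vertex, in the spirit of dependent random choice. Under this merging, a family of $Q$-vertices with a common large neighbourhood becomes a dense subgraph of a minor. Its density can then be checked against Kostochka--Thomason directly, or by using $t$ branch sets explicitly to exhibit a $K_t$ minor, contradicting the hypothesis.
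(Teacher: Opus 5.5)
The paper does not prove this statement; it quotes it from Norin and Postle. Judged on its own, your proposal has a genuine gap, located exactly at what you call the main obstacle, and it cannot be closed within your framework. Your cleaning step and the case $|Q|\le 4|P|$ are fine. But apart from the closing aside, you use $K_t$-minor-freeness only through the bound $e(F)\le c_0t\sqrt{\log t}\,v(F)$ for minors $F$ of $B$, and that information is vacuous when the smaller side is short.

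Here is a family that shows this. Let $|P|=2t$, let every vertex of $Q$ have exactly $t-1$ neighbours in $P$, spread so that every vertex of $P$ has degree at least $t-1$, and put $N:=|Q|$. In any minor $F$, let $k\le 2t$ be the number of branch sets meeting $P$ and $m$ the number of the others. Each of the latter is a single vertex of $Q$; they are pairwise non-adjacent, and each is adjacent to at most $k$ branch sets. Hence $e(F)\le\binom{k}{2}+km\le k\,v(F)$, so $d(F)\le 2t<c_0t\sqrt{\log t}$ for every minor $F$ (for $t$ large), whatever $N$ is.

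Nothing is deleted by your cleaning. Yet your target $(t-1)N\le Ct\sqrt{\log t}\sqrt{2tN}$ fails once $N>2C^2t^3\log t/(t-1)^2$. Even the original inequality, with its term $(t-2)(|P|+|Q|)$, fails once $N\ge 3C^2t^3\log t$. By the statement itself these graphs then contain $K_t$ minors, but no density bound can detect them. So no combination of random subsets, contractions, merging and tuning followed by Kostochka--Thomason can prove the estimate.

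What is missing is an argument that actually builds a $K_t$ minor when the small side has $O(t\sqrt{\log t})$ vertices and many vertices of $Q$ have heavily overlapping neighbourhoods. One option is to assign disjoint groups of $Q$-vertices to $t$ centres in $P$ and verify that all pairs of branch sets are joined. Another is a Thomason-type bound for minors of nearly complete graphs on few vertices. Your phrase ``using $t$ branch sets explicitly'' points this way, but that is the heart of the proof and it is not supplied.

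The intermediate steps do not help either. Passing to a random $P_0\subseteq P$ does not reduce multiplicities. For $x,y\in P_0$ the set of common $Q$-neighbours is unchanged, while each $|N(q)\cap P_0|$ shrinks by a factor of about $p$, so the expected multiplicity of every surviving pair grows by about $1/p$. With $w_{xy}:=\sum_{q\in N(x)\cap N(y)}1/\deg q$, what you obtain is roughly $\sum_{\{x,y\}}\min\{p,w_{xy}\}=O(t\sqrt{\log t}\,|P|)$. This is implied by the case $p=1$ and says nothing about heavy pairs ($w_{xy}>1$). For random neighbourhoods and $N\gg t$, every pair in the example above is heavy.

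The balancing step is not a derivation. The relation $e=|Q|D$, with $D$ the average degree on $Q$, is an identity. Its geometric mean with $e\lesssim\sqrt{\log t}\,|P|D$ is $(\log t)^{1/4}D\sqrt{|P||Q|}$, which is not $O(t\sqrt{\log t}\sqrt{|P||Q|})$ unless $D=O(t(\log t)^{1/4})$. Since $D$ is fixed by $B$, inserting its ``optimal'' value assumes the conclusion. The example also shows that the small-degree case $D=t-1$ is not trivial: there one must still bound $|Q|$ in terms of $|P|$ using minor-freeness.

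A minor point: the deletion of vertices of degree at most $t-2$ must be iterated to reach minimum degree $t-1$. The loss is still at most $(t-2)(|P|+|Q|)$.
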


The dense-core lemma from~\cite[Lemma~4.2]{delcourt-postle} is recorded next.

\begin{lemma}[Delcourt--Postle~\cite{delcourt-postle}]\label{lem:dense-core}
Let $r>2$ and $\delta>0$, let $G$ be a graph, and let $S\subseteq V(G)$. If
\[
  (r-2)e(G[S])>(r-1)\delta|S|+e_G(S,V(G)\setminus S),
\]
then there is a nonempty set $S'\subseteq S$ such that
\begin{enumerate}
  \item $\delta(G[S'])\geq\delta$; and
  \item for every $v\in S'$,
  \[
    |N_G(v)\cap S'|\geq\frac{\deg_G(v)}{r}.
  \]
\end{enumerate}
\end{lemma}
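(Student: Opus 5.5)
The plan is to choose a subset of $S$ maximizing a potential function that encodes the hypothesis. Any such maximizer will satisfy both conclusions, because deleting any single vertex cannot increase the potential.

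For $X\subseteq S$ define
\[
  \varphi(X):=r\,e(G[X])-\sum_{v\in X}\deg_G(v)-(r-1)\delta|X|.
\]
Every edge with both ends in $X$ is counted twice in $\sum_{v\in X}\deg_G(v)$, and every edge leaving $X$ is counted once. Hence $\sum_{v\in X}\deg_G(v)=2e(G[X])+e_G(X,V(G)\setminus X)$, and therefore
\[
  \varphi(X)=(r-2)e(G[X])-e_G(X,V(G)\setminus X)-(r-1)\delta|X|.
\]
The hypothesis of Lemma~\ref{lem:dense-core} says exactly that $\varphi(S)>0$, while $\varphi(\varnothing)=0$.

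Since $S$ is finite, we can choose $S'\subseteq S$ maximizing $\varphi$. Then $\varphi(S')\geq\varphi(S)>0=\varphi(\varnothing)$, so $S'\neq\varnothing$.

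Fix $v\in S'$ and write $d(v):=|N_G(v)\cap S'|$. Removing $v$ from $S'$ deletes $d(v)$ edges inside $S'$. It also removes $\deg_G(v)$ from the degree sum and $(r-1)\delta$ from the last term. Hence
\[
  \varphi(S'\setminus\{v\})=\varphi(S')-r\,d(v)+\deg_G(v)+(r-1)\delta .
\]
By maximality, $\varphi(S'\setminus\{v\})\leq\varphi(S')$, which gives
\[
  r\,d(v)\geq\deg_G(v)+(r-1)\delta .
\]

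Both conclusions follow from this last inequality. Dropping the nonnegative term $(r-1)\delta$ gives $d(v)\geq\deg_G(v)/r$, which is item~2. For item~1, use $\deg_G(v)\geq d(v)$ to get $r\,d(v)\geq d(v)+(r-1)\delta$. Since $r>2$, we may divide by $r-1>0$ and obtain $d(v)\geq\delta$. As $v\in S'$ was arbitrary, $\delta(G[S'])\geq\delta$.

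The only real obstacle is finding the right weights in $\varphi$. The factor $r$ on internal edges and the full degree sum are chosen so that the single-vertex deletion inequality yields both conclusions at once. The factor $(r-1)$ on $\delta$ matches the hypothesis exactly. Once $\varphi$ is fixed, the verification is the bookkeeping above.
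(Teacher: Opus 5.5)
Your proof is correct and complete: the identity $\sum_{v\in X}\deg_G(v)=2e(G[X])+e_G(X,V(G)\setminus X)$ turns the hypothesis into $\varphi(S)>0=\varphi(\varnothing)$, and maximality of $S'$ against single-vertex deletions gives $r\,|N_G(v)\cap S'|\geq\deg_G(v)+(r-1)\delta$, which yields both conclusions as you say. The paper does not prove this lemma itself but imports it from Delcourt--Postle (their Lemma~4.2), and your maximizer of $\varphi$ is the standard potential argument, equivalent to repeatedly deleting a vertex that violates either conclusion (each such deletion strictly increases $\varphi$, so the process cannot reach $\varnothing$).
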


The remaining ingredients are the Kostochka--Thomason density
bound~\cite{kostochka,thomason}, in the explicit form of
\cite[Theorem~4.3]{delcourt-postle}, and Mader's connectivity
theorem~\cite{mader}.

\begin{theorem}[Kostochka--Thomason~\cite{kostochka,thomason}]\label{thm:kt-density}
Let $t\geq 2$ be an integer. If a nonempty graph $G$ satisfies
\[
  d(G)\geq 30t\sqrt{\log t},
\]
then $K_t\preccurlyeq_{\mathrm m}G$.
\end{theorem}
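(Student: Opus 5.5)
Since Theorem~\ref{thm:kt-density} is a known result quoted in explicit form, I would follow the classical minimal-minor argument combined with a random-partition step. Set $D:=d(G)\ge 30t\sqrt{\log t}$ and choose a minor $G'$ of $G$ that is minimal, under the minor order, subject to $e(G')\ge D\,v(G')$. The first step is to read off three local consequences of minimality.
\begin{itemize}
\item Deleting an edge must destroy the property, so $e(G')<D\,v(G')+1$.
\item Deleting a vertex of degree at most $D$ would preserve the property, so $\delta(G')>D$.
\item Contracting an edge $uv$ loses exactly $1+|N(u)\cap N(v)|$ edges and one vertex, so every edge lies in at least $D$ triangles.
\end{itemize}
By the first bound, the average degree of $G'$ is less than $2D+2/v(G')$, so some vertex $v$ has $\deg_{G'}(v)\le 2D$. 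Let $F:=G'[N(v)]$. Then $n:=v(F)\le 2D$, and by the triangle property $\delta(F)\ge D\ge n/2$. It therefore suffices to find a $K_t$ minor in a graph $F$ on $n$ vertices with $\delta(F)\ge n/2$ and $n\ge D$; here $n\ge D$ holds because $\deg(v)>D$.

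The second step is the random construction inside $F$. Put $s:=\lceil c\sqrt{\log t}\,\rceil$ for a suitable constant $c$. Partition a random subset of $V(F)$ into $t$ disjoint random sets $X_1,\dots,X_t$, each of size $s$. Since $t\cdot s\le c'\,t\sqrt{\log t}$, this fits inside $n\ge 30t\sqrt{\log t}$ with room to spare. Fix $i\ne j$. Because $F$ has edge density at least $1/2$, the probability that $X_i$ and $X_j$ have no edge between them is at most about $2^{-s^2}\cdot e^{O(s)}$, obtained by revealing $X_j$ vertex by vertex and using the degree condition. Taking $c$ large enough that $2^{-s^2}\le t^{-3}$, a union bound over the $\binom t2$ pairs shows that with positive probability every pair $X_i,X_j$ is joined by an edge.

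The remaining step, which I expect to be the main obstacle, is making each $X_i$ connected without destroying disjointness. Here I would use the leftover vertices $R:=V(F)\setminus\bigcup_i X_i$, of which there are at least $n/2$. The condition $\delta(F)\ge n/2$ gives every pair of vertices many common neighbours or short paths. More precisely, any two vertices of $F$ either are adjacent or have a common neighbour, up to an error controlled by the degree sum. For each $i$, I would pick a spanning tree of $X_i$ in the auxiliary "distance $\le 2$" graph, which uses $s-1$ links, and realise each link through a private connector vertex in $R$, chosen greedily. This needs $t(s-1)\le|R|$ connectors, which again fits because $30$ is generous. The greedy choice stays available as long as each pair in $X_i$ has more common neighbours in $R$ than connectors already used, roughly $ts$. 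A random choice of the $X_i$ ensures this with high probability, because typical pairs have $\Omega(n)$ common neighbours.

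If the greedy connection fails for atypical pairs, I would fall back on a cleaner device. Since $v$ is adjacent to all of $F$, add $v$ to one branch set and absorb the badly connected vertices into it. Alternatively, resample the $X_i$ so that they avoid the at most $O(n/\sqrt{\log t})$ vertices of small co-degree. Throughout, the constants must be tracked so that $30$ suffices.
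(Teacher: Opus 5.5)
The paper does not prove this statement. It imports it from Kostochka and Thomason in the explicit form of Delcourt--Postle's Theorem~4.3, so your sketch has to stand on its own.

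\textbf{Reduction step.} Your first step is the standard Mader--Thomason reduction and is fine once you take $D:=\lfloor 30t\sqrt{\log t}\rfloor$. Then minimality really yields $n\le 2D$ and $\delta(F)\ge D\ge n/2$; for non-integral $D$ you only get $\delta(F)>D-1$ and $n<2D+1$.

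\textbf{The random partition fails.} The gap is in the dense case, and it is not a matter of constants. The estimate $\mathbb P(e_F(X_i,X_j)=0)\le 2^{-s^2}e^{O(s)}$ is false under the hypothesis $\delta(F)\ge n/2$. For $F=K_{n/2,n/2}$, two random $s$-sets span no edge exactly when all $2s$ vertices lie on one side. That has probability about $2^{1-2s}=t^{-o(1)}$ when $s=\Theta(\sqrt{\log t})$, so you should expect $t^{2-o(1)}$ non-adjacent pairs, and the union bound collapses. Revealing $X_j$ vertex by vertex costs a factor of roughly $|V(F)\setminus N_F(X_i)|/n$ per vertex. The degree condition bounds this quantity only on average over $X_i$ (by about $2^{-s}$), not for every $X_i$; the failure probability is dominated by the rare \emph{one-sided} choices of $X_i$. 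The missing idea is to choose the sets one at a time and insist, via Markov's inequality, that each new set dominates all but an exponentially small fraction of $V(F)$. Only against such earlier sets does a fresh random $s$-set fail with probability $c^{s^2}$.

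\textbf{The connecting step fails.} This step is not merely unproved but false in the generality you state it, because $\delta(F)\ge n/2$ only forces $\kappa(F)\ge 2$. Let $F$ consist of two disjoint cliques of order $(n-2)/2$ together with two vertices adjacent to everything. Then:
\begin{itemize}
\item $\delta(F)=n/2$;
\item almost every random $X_i$ meets both cliques;
\item every pair across the cliques has exactly two common neighbours;
\item any connected set meeting both cliques must use one of the two universal vertices.
\end{itemize}
So at most two of your $t$ sets can be completed to branch sets. Here your claim that typical pairs have $\Omega(n)$ common neighbours fails, and so does your claim that the badly connected vertices form a set of size $O(n/\sqrt{\log t})$. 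Adding $v$ rescues only one branch set. Before any random construction you need a genuine structural step that produces a dense, robustly connected piece, for instance by recursing into one side of a small separator, which is relatively denser. Only then can the sequential random choice plus connectors work. The constant $30$ would then have to be verified for that argument rather than asserted.
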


\begin{theorem}[Mader~\cite{mader}]\label{thm:mader}
Every nonempty graph $G$ contains a nonempty subgraph $H$ such that
\[
  \kappa(H)\geq\frac{d(G)}{2}.
\]
\end{theorem}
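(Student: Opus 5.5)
The plan is to run the standard minimal-counterexample argument, in the form found in Diestel's textbook, with a threshold tuned to the density $d(G)=e(G)/v(G)$.

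\emph{Setup.} If $d(G)=0$, any single vertex gives a subgraph $H$ with $\kappa(H)\ge 0$, so assume $d:=d(G)>0$. Put $k:=\lceil d/2\rceil\ge 1$; it suffices to find a nonempty $k$-connected subgraph. Set $\gamma:=2k-1$. Since $d\le 2k$ would only give $e(G)\ge(2k-1)v(G)$ when $d>2k-2$, we use exactly that: $d>2k-2$, hence $d\ge 2k-1$ in the relevant sense, so
\[
  e(G)=d\,v(G)\ge (2k-1)v(G)>\gamma\,(v(G)-k).
\]
Also $e(G)\le v(G)(v(G)-1)/2$ gives $v(G)\ge 2d+1\ge 2k-1$.

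\emph{The extremal family.} Consider all subgraphs $H\subseteq G$ with
\[
  v(H)\ge 2k-1 \quad\text{and}\quad e(H)>\gamma\,(v(H)-k).
\]
By the setup, $G$ itself is in this family, so the family is nonempty. Choose $H$ in it with $v(H)$ minimal. Three facts about $H$ follow.
\begin{enumerate}
  \item $v(H)\ge 2k$. A graph on $2k-1$ vertices has at most $(2k-1)(k-1)=\gamma(2k-1-k)$ edges, so it fails the strict inequality.
  \item $\delta(H)\ge\gamma+1=2k$. If some $v$ had $\deg_H(v)\le\gamma$, then $H-v$ would have $v(H-v)\ge 2k-1$ and $e(H-v)>\gamma(v(H-v)-k)$. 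This contradicts minimality.
  \item $H$ is $k$-connected, which is the main step.
\end{enumerate}

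\emph{Connectivity.} Suppose $H$ has a separator $S$ with $|S|\le k-1$. Write $H=H_1\cup H_2$ with $V(H_1)\cap V(H_2)=S$ and no edges between $V(H_1)\setminus S$ and $V(H_2)\setminus S$. Any vertex of $H_i\setminus S$ has all its at least $2k$ neighbours inside $H_i$, so $v(H_i)\ge 2k+1\ge 2k-1$. Since each $H_i$ is a proper subgraph, minimality gives $e(H_i)\le\gamma(v(H_i)-k)$. Therefore
\[
  e(H)\le e(H_1)+e(H_2)\le\gamma\bigl(v(H)+|S|-2k\bigr)\le\gamma\,(v(H)-k-1),
\]
which contradicts $e(H)>\gamma(v(H)-k)$. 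Also $v(H)\ge 2k>k$, so $H$ is $k$-connected and $\kappa(H)\ge k\ge d/2$.

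The only delicate point is calibrating $\gamma$ and the lower bound $2k-1$ on the order. They must be chosen so that three things hold at once: $G$ lies in the family (this uses $e(G)\ge(2k-1)v(G)$ and $v(G)\ge 2k-1$), the base case of order $2k-1$ is excluded, and the edge count across a separator of size at most $k-1$ loses at least $\gamma$. The choice $\gamma=2k-1$ satisfies all three.
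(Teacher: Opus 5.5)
The paper gives no proof of this statement; it only cites Mader. Your strategy is the standard minimal-counterexample argument from Diestel's textbook, and your degree and separator steps are correct, but the setup has a genuine gap.

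With $k=\lceil d/2\rceil$ you only know $2k-2<d\le 2k$. Since $d=e(G)/v(G)$ is an arbitrary rational, the inference ``$d>2k-2$, hence $d\ge 2k-1$'' is false. For $d\in(2k-2,2k-1)$ and $v(G)$ large, one has $d\,v(G)\le(2k-1)(v(G)-k)$, so $G$ need not lie in your family, and the family can even be empty. A planar triangulation on $n\ge 7$ vertices has $d=3-6/n\in(2,3)$, hence $k=2$ and $\gamma=3$. Yet every subgraph $H$ with $v(H)\ge 3$ is planar and so satisfies $e(H)\le 3v(H)-6=\gamma(v(H)-k)$. Thus your closing claim that $\gamma=2k-1$ meets all three requirements fails on the first one: for large $v(G)$, membership of $G$ essentially forces $\gamma\le d$.

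The repair is to calibrate to $m:=k-1$ instead of $k$. Your argument can afford this, since your separator count has a full $\gamma$ of slack (you reach $\gamma(v(H)-k-1)$).
For $k=1$, a single edge suffices. For $k\ge 2$, fix any $\gamma$ with $2m\le\gamma\le d$, for example $\gamma:=d$ as in Diestel, or $\gamma:=2k-2$. Consider subgraphs $H'$ with $v(H')\ge 2m$ and $e(H')>\gamma(v(H')-m)$. Then:
\begin{itemize}
\item $G$ qualifies, because $v(G)\ge 2d+1>2m$ and $e(G)=d\,v(G)\ge\gamma\, v(G)>\gamma(v(G)-m)$.
\item Order exactly $2m$ is excluded, because $\binom{2m}{2}<2m^2\le\gamma m$.
\item Minimality gives $\delta(H)>\gamma$, so each side of a separation by $S$ with $|S|\le m$ has at least $2m+2$ vertices.
\item $e(H)\le\gamma(v(H)+|S|-2m)\le\gamma(v(H)-m)$ is then the contradiction.
\end{itemize}
Since $v(H)\ge 2k>k$, $H$ is $k$-connected and $\kappa(H)\ge k\ge d/2$.
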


\section{Proof of the main theorem}

The proof follows the framework of Delcourt and
Postle~\cite[proof of Theorem~2.3]{delcourt-postle}.

\begin{proof}[Proof of Theorem~\ref{thm:main}]
Put $C_{1.4}:=\max\{30300,32C_{2.1}^2\}$. Let $t\geq3$ and $k\geq t$ be integers,
let $G$ be a $K_t$-minor-free graph with $d(G)\geq C_{1.4}k$, and put
$d:=C_{1.4}k$. By deleting edges if necessary, we may assume
that $e(G)=d\,v(G)$. Since $G$ is $K_t$-minor-free,
Theorem~\ref{thm:kt-density} gives $d<30t\sqrt{\log t}$. Set
$Q:=8C_{2.1}^2d\log t\,(t/k)^2$.

For $A\subseteq V(G)$, write
$w_G(A):=\sum_{v\in A}\deg_G(v)$. For every graph $J$ occurring below,
we use $w_J$ analogously. Thus $w_G(V(G))=2e(G)=2d\,v(G)$.

Let $H_1,\ldots,H_m$ be nonempty pairwise vertex-disjoint connected subgraphs
of $G$ such that $w_G(V(H_i))\geq Q$ for every $i\in[m]$ and, after contracting
each $H_i$ to a new vertex $x_i$, the resulting graph $G'$ satisfies
\[
  e(G)-e(G')\leq\frac{1}{100}\sum_{i=1}^m w_G(V(H_i)),
\]
with $m$ chosen as large as possible. The empty collection is admissible, so
such a collection exists. Since $G'$ is a minor of $G$, it is
$K_t$-minor-free. Put $X:=\{x_i:i\in[m]\}$ and
$O:=V(G')\setminus X$. Summing the degrees over these pairwise disjoint
vertex sets gives
\[
  |X|Q=mQ\leq\sum_{i=1}^m w_G(V(H_i))\leq w_G(V(G))=2d\,v(G),
\]
and hence
\[
  |X|\leq\frac{v(G)}{4C_{2.1}^2\log t}\left(\frac{k}{t}\right)^2.
\]
The defining contraction inequality gives
\[
  e(G')\geq d\,v(G)-\frac{1}{100}\sum_{i=1}^m w_G(V(H_i))
  \geq\frac{49}{50}d\,v(G).
\]

\begin{claim}\label{clm:no-large-contractible-set}
There is no nonempty set $A\subseteq O$ such that $G'[A]$ is connected,
$w_{G'}(A)\geq Q$, and
$e(G')-e(G'/A)\leq w_{G'}(A)/100$. In particular, every vertex $v\in O$
satisfies $\deg_{G'}(v)<Q$.
\end{claim}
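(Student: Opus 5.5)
We argue by contradiction with the maximality of $m$. Suppose $A\subseteq O$ is nonempty, $G'[A]$ is connected, $w_{G'}(A)\ge Q$, and $e(G')-e(G'/A)\le w_{G'}(A)/100$. Since $A\subseteq O$, every vertex of $A$ is an original vertex of $G$ that lies in none of the $H_i$, so $G[A]=G'[A]$ is connected. We set $H_{m+1}:=G[A]$ and aim to show that $H_1,\dots,H_{m+1}$ is again an admissible collection. That would contradict the choice of $m$ as large as possible.

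\textbf{Weight condition.} For $v\in O$ we have $N_{G'}(v)\subseteq (N_G(v)\cap O)\cup\{x_i: N_G(v)\cap V(H_i)\ne\varnothing\}$. Distinct neighbours $x_i$ come from distinct $H_i$, so $\deg_{G'}(v)\le\deg_G(v)$. Summing over $A$ gives $w_G(V(H_{m+1}))=w_G(A)\ge w_{G'}(A)\ge Q$.

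\textbf{Contraction condition.} Contracting $H_1,\dots,H_{m+1}$ in $G$ yields the same simple graph as contracting $A$ in $G'$, namely $G'':=G'/A$. The contracted sets are pairwise disjoint, so the order of contraction does not matter, and parallel edges are suppressed either way. Then
\[
e(G)-e(G'')=\bigl(e(G)-e(G')\bigr)+\bigl(e(G')-e(G'/A)\bigr)\le\frac1{100}\sum_{i=1}^m w_G(V(H_i))+\frac1{100}w_{G'}(A).
\]
Using $w_{G'}(A)\le w_G(A)=w_G(V(H_{m+1}))$, the right-hand side is at most $\frac1{100}\sum_{i=1}^{m+1}w_G(V(H_i))$. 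Hence the extended collection is admissible, which contradicts the maximality of $m$.

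\textbf{The ``in particular'' part.} Take $A=\{v\}$ for $v\in O$. Then $G'[A]$ is trivially connected, $w_{G'}(A)=\deg_{G'}(v)$, and $G'/A=G'$, so $e(G')-e(G'/A)=0\le w_{G'}(A)/100$. By the first part, $w_{G'}(A)\ge Q$ must fail, that is, $\deg_{G'}(v)<Q$.

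\textbf{Main obstacle.} The only delicate point is the identification $G/(H_1,\dots,H_{m+1})=G'/A$ together with the comparison $w_{G'}\le w_G$ on $O$. We need both to carry the inequality from $G'$ back to the $w_G$-weights in the definition of admissibility. This is routine bookkeeping about suppressed parallel edges.
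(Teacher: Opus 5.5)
Your proof is correct and matches the paper's argument. You append $A$ to the collection, telescope $e(G)-e(G'/A)=\bigl(e(G)-e(G')\bigr)+\bigl(e(G')-e(G'/A)\bigr)$, use $w_{G'}(A)\le w_G(A)$ to contradict the maximality of $m$, and then specialize to $A=\{v\}$. You also make explicit two points the paper uses without comment: $\deg_{G'}(v)\le\deg_G(v)$ for $v\in O$, and the identification $G/(H_1,\dots,H_{m+1})=G'/A$.
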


\begin{proof}
Since the contractions used to form $G'$ do not change the adjacency relations among vertices of $O$, we have
$G'[O]=G[O]$. The first assertion follows from the maximality of $m$. Indeed,
otherwise
$w_G(A)\geq w_{G'}(A)\geq Q$, and
\[
  \begin{aligned}
  e(G)-e(G'/A)
  &=e(G)-e(G')+e(G')-e(G'/A)\\
  &\leq\frac{1}{100}\sum_{i=1}^m w_G(V(H_i))
    +\frac{1}{100}w_{G'}(A)\\
  &\leq\frac{1}{100}
    \left(\sum_{i=1}^m w_G(V(H_i))+w_G(A)\right),
  \end{aligned}
\]
so $A$ could be added to the collection $H_1,\ldots,H_m$, a contradiction.
In particular, every vertex $v\in O$ satisfies $\deg_{G'}(v)<Q$. Otherwise,
$A=\{v\}$ would contradict the first assertion.
\end{proof}

Define
\[
  Y:=\left\{v\in O:
  |N_{G'}(v)\cap X|\geq\frac{1}{200}\deg_{G'}(v)\right\},
\]
and put $T:=X\cup Y$ and $S:=V(G')\setminus T$.

\begin{claim}\label{clm:exceptional}
 $e(G'[T])+e_{G'}(S,T)<d\,v(G)/100$.
\end{claim}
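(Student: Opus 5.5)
We bound every edge counted on the left by an edge incident to $X$ or to $Y$. Each edge of $G'[T]$ or between $S$ and $T$ has an endpoint in $T = X\cup Y$, so
\[
  e(G'[T])+e_{G'}(S,T)\leq \sum_{x\in X}\deg_{G'}(x)+\sum_{y\in Y}\deg_{G'}(y).
\]
For $y\in Y$ we have $\deg_{G'}(y)\leq 200\,|N_{G'}(y)\cap X|$ by the definition of $Y$. Summing over $Y\subseteq O$ gives $\sum_{y\in Y}\deg_{G'}(y)\leq 200\,e_{G'}(O,X)$. Hence the whole quantity is at most $e(G'[X])+201\,e_{G'}(X,O)$. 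It therefore suffices to show
\[
  e_{G'}(X,O)\le \frac{d\,v(G)}{2\cdot 10^4}\quad\text{and}\quad e(G'[X])\le \frac{d\,v(G)}{10^3},
\]
or any similar split with total below $d\,v(G)/100$.

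Both bounds come from the $K_t$-minor-freeness of $G'$ together with the smallness of $|X|$. For $G'[X]$, Theorem~\ref{thm:kt-density} gives
\[
  e(G'[X])<30t\sqrt{\log t}\,|X|\le 30t\sqrt{\log t}\cdot\frac{v(G)}{4C_{2.1}^2\log t}\Bigl(\frac{k}{t}\Bigr)^2 .
\]
Using $k\ge t$, so $k^2/t\le k$ and $t\le k$, this is $O\bigl(v(G)\,k/(C_{2.1}^2\sqrt{\log t})\bigr)$. Since $d=C_{1.4}k$ and $C_{1.4}\ge 30300$, it is far below $d\,v(G)/10^3$.

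For the bipartite part, apply Theorem~\ref{thm:bipartite} to $G'(X,O)$, which is a $K_t$-minor-free bipartite graph with parts $P=X$ and $Q'=O$, where $|O|\le v(G)$:
\[
  e_{G'}(X,O)\le C_{2.1}t\sqrt{\log t}\sqrt{|X|\,v(G)}+(t-2)\bigl(|X|+v(G)\bigr).
\]
Plugging in the bound on $|X|$ gives
\[
  \sqrt{|X|\,v(G)}\le v(G)\,\frac{k}{2C_{2.1}t\sqrt{\log t}},
\]
so the first term is at most $v(G)\,k/2$. The second term is at most $2t\,v(G)\le 2k\,v(G)$. Altogether $e_{G'}(X,O)\le 3k\,v(G)=3d\,v(G)/C_{1.4}$.

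The main obstacle is the constant bookkeeping. With the factor $201$ we need $603/C_{1.4}$ plus the small $G'[X]$ term to be below $1/100$. This requires $C_{1.4}>60300$, and the stated $C_{1.4}\ge 30300$ is too weak for this crude count. So I would tighten the estimate in one of two ways. The first is to avoid double counting: count the edges incident to $Y$ but not touching $X$ via $\deg_{G'}(y)-|N(y)\cap X|\le 199\,|N(y)\cap X|$, which gives total $\le e(G'[X])+200\,e_{G'}(X,O)$. The second is to use the sharper estimate $(t-2)(|X|+|O|)$ with $|X|$ negligible, so the second term is about $t\,v(G)$ and $e_{G'}(X,O)\le (3/2+o(1))k\,v(G)$. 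Then $200\cdot 1.5/C_{1.4}\approx 300/30300<1/100$, with the $G'[X]$ term absorbed in the slack.

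The exact margin at $t=3$, where the $o(1)$ terms are not small, needs checking. I expect the author's constant $30300\approx 200\cdot 1.5\cdot 101$ to be tuned exactly to this computation.
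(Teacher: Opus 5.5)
Your decomposition is the paper's. You bound everything by $e(G'[X])+e_{G'}(X,O)+\sum_{y\in Y}\deg_{G'}(y)$, use the definition of $Y$ to get $\sum_{y\in Y}\deg_{G'}(y)\le 200\,e_{G'}(X,O)$, and then apply Theorem~\ref{thm:bipartite} and Theorem~\ref{thm:kt-density}. However, the step bounding $e(G'[X])$ is wrong as written: $k\ge t$ gives $k^2/t\ge k$, not $k^2/t\le k$. Your estimate
\[
  e(G'[X])<\frac{15\,k^2\,v(G)}{2C_{2.1}^2\,t\sqrt{\log t}}
\]
is therefore not $O(k\,v(G))$ unless you also know $k=O(t\sqrt{\log t})$, and nothing in your argument supplies this. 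The missing ingredient is that $G$ itself is $K_t$-minor-free with $d(G)\ge C_{1.4}k$. So Theorem~\ref{thm:kt-density} gives $C_{1.4}k=d<30t\sqrt{\log t}$, i.e.\ $k/(t\sqrt{\log t})<30/C_{1.4}$. Substituting this yields $e(G'[X])<225k\,v(G)/(C_{2.1}^2C_{1.4})<k\,v(G)$, which is exactly how the paper handles this term. Your remark that $|X|$ is negligible tacitly needs the same upper bound on $k/t$, because your bound on $|X|$ carries the factor $(k/t)^2$.

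Your bookkeeping worry disappears once you note that $|X|+|O|=v(G')\le v(G)$, since every contracted $H_i$ is nonempty. Then $(t-2)(|X|+|O|)<t\,v(G)\le k\,v(G)$, so $e_{G'}(X,O)\le\frac32 k\,v(G)$ exactly, with no $o(1)$ term. With this, neither of your two refinements is needed; even the crude factor $201$ suffices:
\[
  e(G'[X])+201\,e_{G'}(X,O)<k\,v(G)+301.5\,k\,v(G)<303\,k\,v(G)\le\frac{d\,v(G)}{100}.
\]
This is the paper's computation, with $30300=100\cdot 303$. No check at $t=3$ is needed. In fact the hypotheses force $C_{1.4}t\le C_{1.4}k<30t\sqrt{\log t}$, so $\sqrt{\log t}>1010$ and small $t$ never occurs.

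One further small slip: $\sum_{x\in X}\deg_{G'}(x)=2e(G'[X])+e_{G'}(X,O)$, so your first display gives $2e(G'[X])+201\,e_{G'}(X,O)$. The version with $e(G'[X])$ counted once is true, but it follows from counting the edges directly, as the paper does.
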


\begin{proof}
Applying Theorem~\ref{thm:bipartite} to the bipartite graph $G'(X,O)$ and
using $|O|\leq v(G)$ gives
\begin{align*}
  e_{G'}(X,O)
  &\leq C_{2.1}t\sqrt{\log t}\sqrt{|X||O|}+(t-2)(|X|+|O|)\\
  &\leq C_{2.1}t\sqrt{\log t}
  \sqrt{\frac{v(G)}{4C_{2.1}^2\log t}
  \left(\frac{k}{t}\right)^2v(G)}+t\,v(G)\\
  &\leq \frac{1}{2}k\,v(G)+k\,v(G)=\frac{3}{2}k\,v(G),
\end{align*}
By the definition of $Y$,
\[
  \frac{1}{200}\sum_{v\in Y}\deg_{G'}(v)
  \leq e_{G'}(X,Y)\leq e_{G'}(X,O),
\]
and hence
\[
  \sum_{v\in Y}\deg_{G'}(v)\leq300k\,v(G).
\]

If $X\neq\varnothing$, then Theorem~\ref{thm:kt-density} and the bound on
$|X|$ give
\[
  e(G'[X])<30t\sqrt{\log t}\,|X|
  \leq\frac{15v(G)k^2}{2C_{2.1}^2t\sqrt{\log t}}.
\]
Since $d=C_{1.4}k<30t\sqrt{\log t}$, we have
$k/(t\sqrt{\log t})<30/C_{1.4}$. Since $C_{1.4}>225$, it follows that
$e(G'[X])<225k\,v(G)/(C_{2.1}^2C_{1.4})<k\,v(G)$. The same conclusion is immediate when
$X=\varnothing$. Thus
\[
  e(G'[T])+e_{G'}(S,T)
  \leq e(G'[X])+e_{G'}(X,O)+\sum_{v\in Y}\deg_{G'}(v)
  <303k\,v(G)\leq\frac{d\,v(G)}{100},
\]
where the final inequality follows from $d=C_{1.4}k$ and
$C_{1.4}\geq30300$.
\end{proof}

By Claim~\ref{clm:exceptional},
$e(G'[S])=e(G')-e(G'[T])-e_{G'}(S,T)>97d\,v(G)/100$. Moreover,
\[
  \frac{4d}{5}|S|+e_{G'}(S,V(G')\setminus S)
  \leq\frac{4d}{5}v(G)+\frac{d\,v(G)}{100}
  =\frac{81}{100}d\,v(G)<e(G'[S]).
\]
Lemma~\ref{lem:dense-core}, applied with $r=3$ and $\delta=2d/5$, now yields
a nonempty set $S'\subseteq S$ such that $\delta(G'[S'])\geq2d/5$ and
$|N_{G'}(v)\cap S'|\geq\deg_{G'}(v)/3$ for every $v\in S'$.

Every connected component $K$ of $G'[S']$ satisfies
$w_{G'}(V(K))>Q$. Indeed, $\delta(G'[S'])\geq2d/5$ implies
$v(K)\geq2d/5+1$, and hence
$w_{G'}(V(K))\geq(2d/5)v(K)>4d^2/25$. On the other hand, since
$\log t\leq t/2\leq k/2$ for $t\geq3$ and $C_{1.4}\geq32C_{2.1}^2$,
\[
  Q:=8C_{2.1}^2d\log t\left(\frac{t}{k}\right)^2
  \leq4C_{2.1}^2dk=\frac{4C_{2.1}^2}{C_{1.4}}d^2
  \leq\frac{d^2}{8}<\frac{4d^2}{25}.
\]

Choose a nonempty set $U\subseteq S'$ such that $G'[U]$ is connected,
$w_{G'}(U)<Q$, and $e(G')-e(G'/U)\leq w_{G'}(U)/100$, with
$|U|$ as large as possible. Such a set exists because every singleton in $S'$ satisfies the
conditions above by Claim~\ref{clm:no-large-contractible-set}. Let $K_U$ be the
component of $G'[S']$ containing $U$. The preceding bound gives
$w_{G'}(V(K_U))>Q>w_{G'}(U)$, so $U$ is a proper subset of $V(K_U)$ and some
vertex of $S'\setminus U$ has a neighbor in $U$.

\begin{claim}\label{clm:extension}
For every $z\in S'\setminus U$ with $N_{G'}(z)\cap U\neq\varnothing$, we have
$e(G')-e(G'/(U\cup\{z\}))>w_{G'}(U\cup\{z\})/100$.
\end{claim}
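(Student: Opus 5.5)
The plan is to argue by contradiction, combining the maximality of $|U|$ with Claim~\ref{clm:no-large-contractible-set}. Fix $z\in S'\setminus U$ with $N_{G'}(z)\cap U\neq\varnothing$, put $U^+:=U\cup\{z\}$, and suppose that
\[
  e(G')-e(G'/U^+)\leq\frac{1}{100}w_{G'}(U^+).
\]

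First I would record the easy structural facts about $U^+$. It is a nonempty subset of $S'$. The graph $G'[U^+]$ is connected, because $G'[U]$ is connected and $z$ has a neighbor in $U$. Finally $|U^+|=|U|+1>|U|$.

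Next I would split into two cases according to the weight of $U^+$.

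If $w_{G'}(U^+)<Q$, then $U^+$ satisfies every condition imposed in the choice of $U$: it is a nonempty subset of $S'$ with $G'[U^+]$ connected, $w_{G'}(U^+)<Q$, and the assumed edge-loss bound. Since $U^+$ is strictly larger than $U$, this contradicts the maximality of $|U|$.

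If $w_{G'}(U^+)\geq Q$, then $U^+$ is a connected, nonempty set with $w_{G'}(U^+)\geq Q$ and $e(G')-e(G'/U^+)\leq w_{G'}(U^+)/100$. Moreover $U^+\subseteq S'\subseteq S\subseteq O$, because $S=V(G')\setminus T$ and $X\subseteq T$. This is exactly the configuration excluded by Claim~\ref{clm:no-large-contractible-set}, which is again a contradiction.

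Since both cases are impossible, the assumed inequality fails, and the strict inequality in the claim follows. There is no real obstacle here. The only points to check are that $U^+$ lies in $O$, so that Claim~\ref{clm:no-large-contractible-set} applies, and that the quantities $w_{G'}$ and $G'/U^+$ are measured in $G'$ exactly as in that claim and in the choice of $U$; both hold as written.
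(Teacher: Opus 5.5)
Your proposal is correct and follows the paper's proof essentially verbatim. You assume the inequality fails, use maximality of $|U|$ when $w_{G'}(U\cup\{z\})<Q$, and otherwise invoke Claim~\ref{clm:no-large-contractible-set}, using $U\cup\{z\}\subseteq S'\subseteq S\subseteq O$.
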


\begin{proof}
Suppose the asserted inequality fails. Since $z$ has a neighbor in $U$,
$G'[U\cup\{z\}]$ is connected. If $w_{G'}(U\cup\{z\})<Q$, then
$U\cup\{z\}$ satisfies the defining conditions for $U$ and has larger
order, a contradiction. We may therefore assume that
$w_{G'}(U\cup\{z\})\geq Q$. Since all vertices of $U\cup\{z\}$ belong to
$O$, this contradicts Claim~\ref{clm:no-large-contractible-set}, applied with
$A:=U\cup\{z\}$.
\end{proof}

Let $G'':=G'/U$, and let $x$ be the vertex obtained by contracting $U$. Set
$R:=N_{G''}(x)\setminus X$ and
$R':=N_{G''}(x)\cap(S'\setminus U)$. A direct count of the edges lost
when $U$ is contracted gives
\begin{align*}
  \sum_{u\in U}|N_{G'}(u)\cap S'|-|R'|
  &=2e(G'[U])
    +\sum_{\substack{y\in S'\setminus U\\ |N_{G'}(y)\cap U|\geq1}}
      \bigl(|N_{G'}(y)\cap U|-1\bigr)\\
  &\leq 2\bigl(e(G')-e(G'/U)\bigr)\leq\frac{1}{50}w_{G'}(U).
\end{align*}
Since $|N_{G'}(v)\cap S'|\geq\deg_{G'}(v)/3$ for every $v\in S'$, it follows
that
\[
  \sum_{u\in U}|N_{G'}(u)\cap S'|\geq\frac{1}{3}w_{G'}(U).
\]
Consequently,
\[
  |R'|\geq\left(\frac{1}{3}-\frac{1}{50}\right)w_{G'}(U)
  >\frac{1}{4}w_{G'}(U)\geq\frac{|R|}{4}.
\]
In particular, $R'$ and $R$ are nonempty.

Fix $z\in R'$. Contracting $U$ and then the edge $xz$ has the same effect as
contracting $U\cup\{z\}$ in $G'$. By Claim~\ref{clm:extension} and the
defining property of $U$,
\begin{align*}
  e(G'')-e(G''/xz)
  &=\bigl(e(G')-e(G'/(U\cup\{z\}))\bigr)
    -\bigl(e(G')-e(G'/U)\bigr)\\
  &>\frac{1}{100}\bigl(w_{G'}(U\cup\{z\})-w_{G'}(U)\bigr)\\
  &=\frac{1}{100}\deg_{G'}(z).
\end{align*}
Contracting an edge in a simple graph deletes the edge itself and one
additional edge for each common neighbor of its endpoints. Thus $x$ and $z$
have more than $\deg_{G'}(z)/100-1$ common neighbors in $G''$. Moreover,
$z\in S'\subseteq S$ implies $z\notin Y$, and hence
$|N_{G'}(z)\cap X|<\deg_{G'}(z)/200$. Contracting $U$ does not change the
neighbors of $z$ in $X$. Therefore more than
$\deg_{G'}(z)/200-1$ common neighbors of $x$ and $z$ lie outside $X$, and
every such vertex belongs to $R$. Since $\deg_{G'}(z)\geq2d/5$ and
$d=C_{1.4}k\geq1000$, we have
\[
  |N_{G''}(z)\cap R|
  >\frac{\deg_{G'}(z)}{200}-1
  \geq\frac{d}{500}-1
  \geq\frac{d}{1000}.
\]

Summing over $z\in R'$ gives
\[
  2e(G''[R])\geq\sum_{z\in R'}|N_{G''}(z)\cap R|
  \geq\frac{d}{1000}|R'|>\frac{d}{4000}|R|,
\]
and hence $d(G''[R])>d/8000$. No vertex of $R$ is involved in the contractions
used to form $G''$, so $G''[R]=G[R]$. By
Theorem~\ref{thm:mader},
$G[R]$ contains a nonempty subgraph $H$ satisfying
\[
  \kappa(H)\geq\frac{d(G[R])}{2}>\frac{d}{16000}
  =\frac{C_{1.4}}{16000}k\geq k.
\]
Finally,
\begin{align*}
  v(H)
  &\leq |R|\leq w_{G'}(U)<Q
   =8C_{2.1}^2C_{1.4}k\log t\left(\frac{t}{k}\right)^2
   \leq C_{1.4}^2t\log t.
\end{align*}
where the final inequality uses $k\geq t$ and
$C_{1.4}\geq32C_{2.1}^2$, which in particular gives
$C_{1.4}\geq8C_{2.1}^2$.
Thus $H$ has the required connectivity and order.
\end{proof}

\section{A matching lower bound}\label{sec:lower-bound}

We now prove the matching lower bound stated in Theorem~\ref{thm:lower-bound}.

\begin{proof}[Proof of Theorem~\ref{thm:lower-bound}]
Fix an integer $A\geq2$, and let $t$ be sufficiently large in terms of $A$.
Throughout the proof, all asymptotic notation is with respect to $t\to\infty$,
with $A$ fixed. Put
\[
  n:=\left\lceil\frac{t\log t}{100A}\right\rceil,
  \qquad
  p:=\frac{9At}{n},
  \qquad
  G\sim G(n,p).
\]
Thus $np=9At$ and $p\sim900A^2/\log t$. In particular, $p\leq1/2$ for all
sufficiently large $t$.

We first estimate the density of $G$. Let $X:=e(G)$ and
$\mu:=\mathbb E X=p\binom{n}{2}=9At(n-1)/2$. For all sufficiently large $t$,
$Atn\leq(1-\frac{1}{10})\mu$. By the Chernoff lower-tail bound,
\[
  \mathbb P\bigl(d(G)<At\bigr)
  \leq\mathbb P\left(X<\left(1-\frac{1}{10}\right)\mu\right)
  \leq\exp\left(-\frac{(1/10)^2\mu}{2}\right)=o(1).
\]

We next use a standard first-moment argument for complete-minor models to
estimate the probability that $G$ contains a $K_t$ minor. This argument goes
back to Bollob\'as, Catlin and Erd\H{o}s~\cite{bollobas-catlin-erdos}; see also
Fountoulakis, K\"uhn and Osthus~\cite[Section~2]{fountoulakis-kuhn-osthus}.
Suppose
that $B_1,\ldots,B_t$ are the branch sets of such a minor. Since the branch
sets are pairwise disjoint, at least $q:=\lfloor t/2\rfloor$ of them have
order at most $2n/t$. Choose and order $q$ such branch sets, and ignore their
internal connectivity.

Fix pairwise disjoint nonempty sets $B_1,\ldots,B_q$, each of order at most
$2n/t$. Since $p\leq1/2$, the inequality $1-x\geq\exp(-2x)$ for
$0\leq x\leq1/2$ gives, for $i<j$,
\[
  \mathbb P\bigl(e_G(B_i,B_j)=0\bigr)
  =(1-p)^{|B_i||B_j|}
  \geq\exp\!\left(-\frac{8pn^2}{t^2}\right)
  =\exp\!\left(-\frac{72An}{t}\right).
\]
Since $n\leq t\log t/(100A)+1$, we have
\[
  \frac{72An}{t}
  \leq\frac{18}{25}\log t+\frac{72A}{t}
  \leq\frac{4}{5}\log t
\]
for all sufficiently large $t$. Hence $B_i$ and $B_j$ are joined by an edge
with probability at most $1-t^{-4/5}$.

Since $\binom{q}{2}\geq t^2/16$ for all sufficiently large $t$, the inequality
$1-x\leq\exp(-x)$ gives
\[
  \mathbb P\bigl(e_G(B_i,B_j)>0\text{ for all }i<j\bigr)
  \leq(1-t^{-4/5})^{\binom{q}{2}}
  \leq\exp(-t^{6/5}/16).
\]
The number of ordered pairwise disjoint families $(B_1,\ldots,B_q)$ is at
most $(q+1)^n$: assign label $i$ to each vertex in $B_i$, for $i\in[q]$, and
label $0$ to every vertex outside $\bigcup_{i=1}^q B_i$. By the union bound,
\[
  \mathbb P(K_t\preccurlyeq_{\mathrm m}G)
  \leq(q+1)^n\exp(-t^{6/5}/16)
  \leq\exp\!\left(n\log(t+1)-t^{6/5}/16\right)=o(1),
\]
where the last equality follows from $n=O_A(t\log t)$.

It remains to rule out small $t$-connected subgraphs. Fix
$S\subseteq V(G)$ with $|S|=s$, where
\[
  t+1\leq s\leq\frac{n}{72\mathrm e A},
\]
and put $Y:=e(G[S])$ and $r:=\lceil ts/2\rceil$. Then
$Y\sim\operatorname{Bin}(\binom{s}{2},p)$. Since
$s\leq n/(72\mathrm e A)$, we have $r\geq\mathbb E Y$. By the standard
binomial upper-tail bound and again using $s\leq n/(72\mathrm e A)$,
\[
  \mathbb P(Y\geq ts/2)
  \leq\left(\frac{\mathrm e\binom{s}{2}p}{r}\right)^r
  \leq\left(\frac{\mathrm e ps}{t}\right)^r
  \leq8^{-r}
  \leq8^{-ts/2}.
\]
Since $n\leq t\log t/(100A)+1$ and $s\geq t+1>t$, we have
$n/s\leq\log t/(100A)+1/t$. Thus the standard estimate
$\binom{n}{s}\leq(\mathrm e n/s)^s$ gives, for all sufficiently large $t$,
\[
\begin{aligned}
  \binom{n}{s}
  &\leq\left(\frac{\mathrm e n}{s}\right)^s
  =\exp\left(s\log\left(\frac{\mathrm e n}{s}\right)\right)
  \leq\exp\left(s\log\left(\mathrm e
      \left(\frac{\log t}{100A}+\frac{1}{t}\right)\right)\right) \\
  &\leq\exp\left(\frac{ts}{4}\log 8\right)=8^{ts/4}.
\end{aligned}
\]
Here the last inequality holds because
$\log\bigl(\mathrm e(\log t/(100A)+1/t)\bigr)=O_A(\log\log t)$,
which is at most $(t/4)\log 8$ for all sufficiently large $t$.
A union bound over $s$ now gives
\begin{align*}
  &\mathbb P\left(\text{there exists a set }S\subseteq V(G)\text{ with }
    t+1\leq|S|\leq\frac{n}{72\mathrm e A}
    \text{ and }e(G[S])\geq\frac{t|S|}{2}\right)\\
  &\leq
    \sum_{s=t+1}^{\left\lfloor n/(72\mathrm e A)\right\rfloor}
    \binom{n}{s}\mathbb P(Y\geq ts/2)
    \leq\sum_{s=t+1}^{\infty}8^{-ts/4}
    =\frac{8^{-t(t+1)/4}}{1-8^{-t/4}}=o(1).
\end{align*}

The three failure probabilities above are $o(1)$. Hence, by the union bound,
with probability $1-o(1)$, the graph $G$ has
density at least $At$, is $K_t$-minor-free, and has no set
$S\subseteq V(G)$ with $t+1\leq|S|\leq n/(72\mathrm e A)$ and
$e(G[S])\geq t|S|/2$. Fix a graph $G$ with these properties. Let $H$ be any
$t$-connected subgraph of $G$ and put $S:=V(H)$.
Then $|S|\geq t+1$, and the minimum degree of $H$ is at least $t$. Hence
\[
  e(G[S])\geq e(H)\geq\frac{t|S|}{2}.
\]
By the choice of $G$, it follows that
\[
  v(H)>\frac{n}{72\mathrm e A}
  \geq\frac{1}{7200\mathrm e A^2}t\log t,
\]
as required.
\end{proof}

For every fixed integer $A\geq2$, Theorem~\ref{thm:lower-bound} gives
$K_t$-minor-free graphs of density at least $At$ in which every
$t$-connected subgraph has order at least a constant multiple of $t\log t$.
Thus, in the case $k=t$, the order bound in Theorem~\ref{thm:main} is best
possible up to a constant factor.

\section{Acknowledgments}

This research was partially supported by the Natural Science Foundation of
Fujian Province (Grant No. 2025J01486).

\medskip
\noindent\textbf{Statement of AI Use.}
The central proof idea in this manuscript was developed with the assistance of
GPT 5.6 Sol. The author subsequently refined the idea and wrote
the manuscript, using the same model to assist with language polishing.

\end{document}